\def\@rmrk#1#2{\refstepcounter
    {#1}\@ifnextchar[{\@yrmrk{#1}{#2}}{\@xrmrk{#1}{#2}}}
\newfont{\bfit}{cmbxti10 scaled 2000}
 \newfont{\biggi}{cmr12 scaled 2000}
 \newcommand{\skrie}{{\mathcal E}}
 \newcommand{\skrig}{{\mathcal G}}
 \newcommand{\skrik}{{\mathcal K}}
 \newcommand{\skril}{{\mathcal L}}
 \newcommand{\skriv}{{\mathcal V}}
 \newcommand{\skriy}{{\mathcal Y}}
 \newcommand{\heap}[2]{\genfrac{}{}{0pt}{}{#1}{#2}}
 \newcommand{\sfrac}[2]{\mbox{$\frac{#1}{#2}$}}
\def\1{{\mathchoice {1\mskip-4mu\mathrm l}      
{1\mskip-4mu\mathrm l}
{1\mskip-4.5mu\mathrm l} {1\mskip-5mu\mathrm l}}}
\newcommand{\eq}{\begin{equation}}
\newcommand{\en}{\end{equation}}
\renewcommand{\subsection}{\secdef \subsct\sbsect}
\newcommand{\subsct}[2][default]{\refstepcounter{subsection}
\vspace{0.15cm}
{\flushleft\bf \arabic{section}.\arabic{subsection}~\bf #1  }
\nopagebreak\nopagebreak}
\newcommand{\sbsect}[1]{\vspace{0.1cm}\noindent
{\bf #1}\vspace{0.1cm}}
\newtheorem{theorem}{Theorem}[section]
\newtheorem{lemma}[theorem]{Lemma}
\newtheoremstyle{thm}{1.5ex}{1.5ex}{\itshape\rmfamily}{}
{\bfseries\rmfamily}{}{2ex}{}
\newtheoremstyle{rem}{1.3ex}{1.3ex}{\rmfamily}{}
{\itshape\rmfamily}{}{1.5ex}{}
\theoremstyle{rem}
\def\thebibliography#1{\section*{References}
  \list%
  {\arabic{enumi}.}
    {\settowidth\labelwidth{[#1]}\leftmargin\labelwidth
    \advance\leftmargin\labelsep
    \parsep0pt\itemsep0pt
    \usecounter{enumi}}
    \def\newblock{\hskip .11em plus .33em minus .07em}
    \sloppy                   
    \sfcode`\.=1000\relax}
\begin{document}
\title[Joint  Large  Deviation  principle   for  empirical  measures of the   d-regular  random  graphs] {Joint Large  Deviation  principle for  empirical  measures of the d-regular  random  graphs}
\author[K.D-A]{}

\maketitle
\thispagestyle{empty}
\vspace{-0.5cm}

\centerline{\sc{U. Ibrahim, A. Lotsi  and  K. Doku-Amponsah$^{1}$}}
\centerline{\sc{University  of  Ghana}}
\centerline{College  of  Basic  and  Applied  Sciences, School  of  Physical  and  Mathematical  Sciences}
\centerline{Statistics  Department, Box LG 115, Legon-Accra, West Africa}
\centerline{Email:$kdoku-amponsah@ug.edu.gh^1$}

\vspace{0.5cm}

 \centerline{\bf \small Abstract}
\begin{quote}{\small For a $d-$regular  random model,  we  assign to  vertices $q-$state spins. From this  model,  we define   the \emph{empirical co-operate measure}, which  enumerates the  number  of  co-operation between a  given couple of spins,  and \emph{ empirical spin measure},   which  enumerates the  number of sites having a given spin  on the  $d-$regular random graph  model.  For  these  empirical measures we  obtain  large  deviation  principle(LDP) in  the  weak  topology.} 
\end{quote} \vspace{0.5cm}

\textit{Keywords: } $d-$regular  random graph, random partition function, empirical  cooperate  measure, empirical spin  measure,large deviation principle.

{\textit{Mathematics Subject Classification :}   60F10,05C80.\\

\section{Introduction}
The structure of a network (graph) affects its performance. For example, the topology of a social network influences the spread of information or diseases ( Strogatz, 2001).Moreover, it is not easy to study the properties of a uniform regular random graph if cannot be generated. The cardinality of the graph can only be found if the graph can be generated. In fact it a strenuous task to try to get a finite list of the graph because the number of regular graphs is very gargantuan. A superior method to follow is the Configuration Model proposed by Bollabàs (1980) which was inspired in parts by Bender and Canfield( 1978). The configuration or pairing model is used to generate firstly uniformly at random a set of multigraphs (with loops and multi-edges ) and conditioning on its distribution having no loops or multi-edges, a simple regular  graph is generated.\\

We  consider  in  this  paper   uniform d-regular  random  graph  model constructed  via  pairing model,  See Bollobas (1980),  as  follows:  Let   $D=(d_1,d_2,d_2,...,d_n)$  where  $d_i=d$  for  all  $i= 1,2,3,...,n$  and  $nd$  even. For  each  vertex $i\in[n]$ we represent  as  a  bin $v_i$  and  we place $d_i$  distinct points  in bin  $v_i$  for  every  $1\le i\le n.$  Take  a  uniformly random  perfect matching $\sum_{i=1}^{n}d_i=nd$ points. This  perfect  matching  is  is  called a  pairing; each  pair  of  points joined  in  the  matching  is  called  a  pair  of  the pairing.  Each  pairing  corresponds  to  multi-graph  and  conditioned on  the set  of  simple  graphs  we  obtain  the  uniformly  $d-$  regular  random  graph.\\

In this paper we prove a  joint  LDP  for  the  empirical  spin  measure   and  empirical  co-operate  measure  of  the  $q$-  state  $d-$ regular  random  graph. Specifically,  we  prove  a joint  LDP  for  the  empirical  spin  measure and  empirical  co-operate  measure   via  combinatoric  argument, see  Dembo et al.~(2014, Page~10)   and  the  method  of  types, see  Dembo and Zeitouni ~(1998, Pages~16-17).

The  remaining  part  of  the  article is  organized as follows: We present in Subsection~\ref{model} the  q-State $d-$regular random graph. The main result of  this  article, Theorem~\ref{mainpart}  is presented in Subsection~\ref{main}.Subsection~\ref{LDP} contains the  LDP probabilities, Lemma~\ref{LDPP1}   and  proof. Finally,  we use  our  LDP  probability  results  to  prove  our  main  Theorem.

\section{Main Result}\label{section2}\label{main}
\subsection{The Model}\label{model}
Let $[n]=\{1,2,3,...,n\}$   be  fixed set of $n$  sites and  $\skrig(n,d)$  be  the  set  of  all  uniformly   random $d-regular$  graphs  no sites $[n]$ and  fixed   degree $d.$   We  note  that the  edges for  the  configuration  model  via  the  half-edges is  given  by  $E\subset\skrie:=\big\{1,2,3,...nd\big\}$   and  $|E|\in[0,\,n(n-1)/2].$  Let  $[q]=\{1,2,3,...q\}$  denote  spin  set.  We  define  a  function $\eta$  by  $\eta:[n]\to[q].$

For a probability distribution $\mu$ on the  spin set $[q]$   we  define  the Potts  model on  uniformly {\em $d-$regular spinned random
graph} or\emph{ spinned  graph}~$G_d([n],[q])$ with $n$ sites may  be  obtained
as follows:

 Sample  from $\skrig(n,d)$ a  $d-$regular random  graph. we assign  to sites  $[n]$ a spin configuration  $\eta$  according  to  the  spin  law  $\mu^n$  and  note  that  $\skriv_1,\skriv_2,\skriv_3,....,\skriv_q$  partition  the  sites set $[n]$  in such  away that $card(\skriv_i)=n\mu(i)$  for  all  $i\in[q].$  Always observe  $G_d([n],[q])=\{\big(\eta(i)\,:\,i\in [n]\big),E\}$ under
the joint probability measure of graph and spin. We shall call $G_d([n],[q])$  the $d-$regular spinned random
graph model  and interpret $\eta(i)$ as the spin of the site $i$.\\


For each spin  configuration $\eta$, we define  a probability measure, the \emph{empirical spin
measure}~$L_{1}\in\skril([q])$,~by
$$L_{1}(y):=\frac{1}{n}\sum_{j\in E}\delta_{\eta(j))}(y),\quad\mbox{ for $y\in [q]$, }$$
and a symmetric finite measure, the \emph{empirical bond
measure} $L_{2}\in\tilde\skril_*([q]\times[q]),$ by
$$L_{2}(y,x):=\frac{1}{nd}\sum_{(i,j)\in E}[\delta_{(\eta(i),\,\eta(j))}+
\delta_{(\eta(j),\,\eta(i))}](y,x),\quad\mbox{ for
$x,y\in[q]$. }$$ The total mass $\|L_{2}\|=2|E|/nd$.  Let  $\rho$  be the probability distribution on $[q]$ and  $\nu$  be the probability distributions on $[q]^2$ such that $\nu_{ij}=\nu_{ji}$ for all $i,j\in[q].$ Then $(\rho,\nu)$ is said to be admissible for  the graph $G$  on  $n$ vertices and $nd/2$ half-edges if  $\rho=\nu_1.$ i.e. the  $[q]$- marginal of  $\nu.$\\

And, for any finite or countable set $\skriy$ we denote by
$\skril(\skriy)$ the space of probability distributions, and by
$\tilde\skril(\skriy)$ the space of finite distributions on $\skriy$,
both endowed with the topology   generated   by  the total variation
norm.  Further, if $\omega\in\tilde\skril(\skriy)$ and
$\nu_1\ll\omega$ we denote by
$$H(\nu_1\,\|\,\omega)=\big\langle\nu_1,\,
\log\sfrac{\nu_1}{\omega}\big \rangle$$ the \emph{relative
entropy} of $\nu_1$ with respect to $\omega$. We set
$H(\nu_1\,\|\,\omega)=\infty$ if
$\nu_1\not\ll\omega$. 
Finally, we denote by $\tilde\skril_*(\skriy \times \skriy)$ the
subspace of symmetric distributions in $\tilde\skril(\skriy \times
\skriy)$.\\

\subsection{Main Results}\label{main}


\begin{theorem}\label{Ising.1}\label{main1}\label{mainpart}
Suppose that $G([n],d)$  is a  Potts  model  on  $d-$ regular  graph with with  spin  law  $\mu:[q]\to [0,\,1].$   Then, the pair  $(L_1,\,L_2)$ obeys  an  LDP  on  $\skril([q])\times\skril_*([q]\times[q])$   with  speed  $n$  and  convex,  good  rate  function

$$\begin{aligned}
I(\rho,\nu)= \left\{ \begin{array}{ll}H(\rho\, |\, \mu)+\sfrac{d}{2}H(\nu\,|\, \rho\otimes \rho)&\,\,\mbox{ if  $(\rho,\nu)$  is  admissible  }\\
\infty & \mbox{otherwise.}
\end{array}\right.
\end{aligned}$$
\end{theorem}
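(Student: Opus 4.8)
The plan is to prove the LDP by the classical route of establishing it first for the sequence of \emph{pairing} (configuration) models and then transferring it to the uniform $d$-regular simple graph by conditioning. We break the argument into the following steps.

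\emph{Step 1: Reduction to the pairing model.} First I would work with the configuration model $G^*_d([n],[q])$ on $nd$ half-edges, where the spin configuration $\eta$ is drawn i.i.d.\ from $\mu^n$ and a uniformly random perfect matching of the $nd$ points is chosen independently. I will show that $(L_1,L_2)$ satisfies the claimed LDP under the pairing model. Since the uniform simple $d$-regular graph is obtained by conditioning the pairing model on the event $\skris_n$ of producing no loops and no multi-edges, and it is classical (Bollob\'as~1980, Bender--Canfield~1978) that $\P(\skris_n)\to e^{-(d^2-1)/4}>0$, this event has probability bounded away from $0$ and $1$; hence by the standard conditioning lemma for large deviations (an LDP is preserved under conditioning on an event of non-vanishing probability, with the same rate function) the LDP transfers verbatim to $G_d([n],[q])$.

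\emph{Step 2: The LDP probability estimate (Lemma~\ref{LDPP1}).} The heart of the matter is to compute, for admissible $(\rho,\nu)$, the exponential rate of
$$\P\big(L_1\approx\rho,\ L_2\approx\nu\big)$$
under the pairing model. Here I would use the method of types together with a direct combinatorial count. Conditioning on the spin configuration $\eta$, the event $\{L_1\approx\rho\}$ is $\{\,|\skriv_i|\approx n\rho(i)\ \forall i\,\}$ and has probability $e^{-nH(\rho\,|\,\mu)+o(n)}$ by Sanov's theorem (or Stirling on the multinomial coefficient). Given such an $\eta$, the half-edges split into blocks of sizes $n\rho(i)d$ by spin. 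The number of perfect matchings of $nd$ points realizing a prescribed bond-type $\nu$ (i.e.\ with $\approx \tfrac{nd}{2}\nu(i,j)$ pairs joining spin-$i$ to spin-$j$ half-edges) is a product of multinomial coefficients (distributing each spin block among the partner-classes) times $\prod_{i<j}(\tfrac{nd}{2}\nu_{ij})!$ (ways to match across) times $\prod_i (\tfrac{nd}{2}\nu_{ii}-1)!!$ (ways to match within), all divided by the total number $(nd-1)!!$ of matchings. A Stirling-formula computation of the logarithm of this ratio, using $\rho=\nu_1$, yields exactly $-\tfrac{d}{2}H(\nu\,|\,\rho\otimes\rho)+o(n)$. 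Combining, $\P(L_1\approx\rho,L_2\approx\nu)=e^{-nI(\rho,\nu)+o(n)}$, which is the content of Lemma~\ref{LDPP1}.

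\emph{Step 3: From the local estimate to the full LDP.} Since $[q]$ is finite, the state space $\skril([q])\times\skril_*([q]\times[q])$ is compact, so exponential tightness is automatic and it suffices to establish the local (weak) LDP: a matching lower bound on open sets and upper bound on compact sets. Both follow from Step 2 by a covering argument over the $O(n^{q+q^2})$-many types (the polynomially-many types contribute only $o(n)$ to the exponent, as in Dembo--Zeitouni~1998, pp.~16--17): the lower bound on an open $U$ comes from picking an admissible $(\rho,\nu)\in U$ of rational type and applying the estimate; the upper bound on a closed set $F$ comes from summing the estimate over all types in $F$ and extracting the infimum of $I$ via continuity of $I$ on its (closed) effective domain. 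Finally I would check that $I$ is lower semicontinuous with compact level sets — immediate from compactness of the space and l.s.c.\ of relative entropy — and that $I$ is convex: $H(\rho\,|\,\mu)$ is convex in $\rho$, and $(\rho,\nu)\mapsto H(\nu\,|\,\rho\otimes\rho)$ is jointly convex (it is the relative entropy seen through the affine-then-entropy composition, equivalently a sum of terms $\nu_{ij}\log\frac{\nu_{ij}}{\rho_i\rho_j}$, which is jointly convex in $(\nu_{ij},\rho_i,\rho_j)$); admissibility $\rho=\nu_1$ is a linear constraint, so the effective domain is convex.

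\textbf{Main obstacle.} The delicate point is Step 2: obtaining the exact constant $\tfrac{d}{2}H(\nu\,|\,\rho\otimes\rho)$ rather than something off by a factor, because one must carefully account for the symmetry of $\nu$ (diagonal vs.\ off-diagonal pairs are counted differently — double-counting in $L_2$'s definition versus the $!!$ for intra-class matchings), keep the $nd$-vs-$nd/2$ bookkeeping straight, and verify that all the Stirling error terms are genuinely $o(n)$ uniformly over the relevant range of types. Getting these combinatorial factors exactly right, and confirming that the admissibility constraint $\rho=\nu_1$ is exactly what makes the count consistent, is where the real work lies; the rest is standard method-of-types machinery.
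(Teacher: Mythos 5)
Your proposal follows essentially the same route as the paper's proof: both rest on the exact combinatorial formula (\ref{LDPP}) for $P\big((L_1,L_2)=(\rho,\nu)\big)$ — a multinomial coefficient for the spin assignment, multinomials distributing each spin block among partner classes, cross-class factorial factors and within-class double factorials, all normalized by the $(nd-1)!!$ perfect matchings — followed by Stirling's formula to obtain the two-sided estimate of Lemma~\ref{LDPP1}, and then the method of types with the polynomial bound $|\skrik_n|\le (n+1)^{|\Gamma|(1+|\Gamma|)}$ to upgrade the pointwise estimate to the full LDP; your Step~3 and your convexity/goodness remarks correspond to the paper's Subsection~3.2. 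The one genuine difference is your Step~1: the paper computes probabilities directly in the pairing model and never discusses the passage to the uniform \emph{simple} $d$-regular graph, whereas you make the conditioning on the simplicity event $\skris_n$ explicit. That is a worthwhile addition, but the principle you invoke is stated too strongly: from $\P(\skris_n)\to e^{-(d^2-1)/4}>0$ one gets the LDP upper bound for free (since $\P(A\mid \skris_n)\le \P(A)/\P(\skris_n)$), but not the lower bound, because conditioning on an event of merely non-vanishing probability can in principle annihilate an exponentially small event. What is actually needed is the uniform-over-types estimate $\P\big(\skris_n\mid (L_1,L_2)=(\rho,\nu)\big)\ge e^{-o(n)}$, which does hold here (the loop and multi-edge counts remain asymptotically Poisson with bounded means conditionally on the type) but deserves a short separate argument. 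With that caveat, your outline is correct and, on this particular point, more complete than the paper's own proof.
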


\section{Proof of  Large Deviation Theorem~\ref{mainpart}}\label{extension}

\subsection{Large  Deviation Probabilities.}\label{LDP}
Now  let   $(\rho,\nu)$  be  admissible  for the  graph  $G_d([n],[q]).$  
Then  we  have
\begin{equation}\label{LDPP}
 P\Big((L_1, L_2)=(\rho,\nu)\Big)=\prod_{i=1}^{n}\mu(\eta(i))\Big(\heap{n}{n\rho_1,n\rho_2,...,n\rho_q}\Big )\prod_{i=1}^{q}\Big(\heap{nd\rho_i}{nd\nu_{i1},nd\nu_{i2},...nd\nu_{iq}}\Big)
\prod_{i<j}^{q}\sqrt{(nd\nu_{ij})!}\sfrac{\prod_{i=j}(nd\nu_{ij}-1)!!}{(nd-1)!!},
\end{equation}
where  $\log (n-1)!!:=\sfrac{n}{2}\log n-\sfrac{n}{2}+o(\log n)$  and $\sum_{j=1}^{q}\nu_{ij}=\rho_i, \, i=1,2,3,...,q.$

\begin{lemma}\label{LDPP1}
Let  $G_d([n],[q])$  be  a  uniformly  random  regular  graphs  with  spin  law  $\mu.$  Let  $(\rho,\nu)$  be  admissible  for  the  graph $G_d([n],[q]).$  Then,  the  pair  $(L_1,\,L_2)$   satisfies

 \begin{equation}
 e^{-nH(\rho\, |\, \mu)-\sfrac{nd}{2}H(\nu\,|\, \rho\otimes \rho)-o(\log n)}\le P\Big((L_1, L_2)=(\rho,\nu)\Big)
 \le e^{-nH(\rho\, |\, \mu)-\sfrac{nd}{2}H(\nu\,|\, \rho\otimes \rho)+o(\log n)}.
 \end{equation}

\end{lemma}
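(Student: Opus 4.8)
The plan is to start from the exact combinatorial identity \eqref{LDPP} for $P\big((L_1,L_2)=(\rho,\nu)\big)$ and to estimate each factor on the logarithmic scale, tracking only the terms of order $n$ and collecting everything of order $o(\log n)$ (or smaller) into the error. The first step is to apply Stirling's formula in the form $\log m! = m\log m - m + O(\log m)$, together with the analogous statement $\log(m-1)!! = \tfrac{m}{2}\log m - \tfrac{m}{2} + o(\log m)$ supplied in the text, to each of the four groups of factors: the product $\prod_i \mu(\eta(i))$, the multinomial coefficient $\binom{n}{n\rho_1,\dots,n\rho_q}$, the product of multinomials $\prod_i \binom{nd\rho_i}{nd\nu_{i1},\dots,nd\nu_{iq}}$, and the matching factor $\prod_{i<j}\sqrt{(nd\nu_{ij})!}\,\cdot \prod_{i=j}(nd\nu_{ij}-1)!!\,/\,(nd-1)!!$.

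The second step is the bookkeeping that turns these asymptotics into the claimed rate. For the spin factor, $\log\prod_{i\in[n]}\mu(\eta(i)) = \sum_{y\in[q]} n\rho_y\log\mu(y) = n\langle\rho,\log\mu\rangle$ since exactly $n\rho_y$ sites carry spin $y$ under the conditioning. The first multinomial gives $\log\binom{n}{n\rho_1,\dots,n\rho_q} = -n\langle\rho,\log\rho\rangle + O(\log n)$, and adding it to the spin factor produces exactly $-nH(\rho\,|\,\mu) + O(\log n)$. For the bond part, combining $\sum_i \log\binom{nd\rho_i}{nd\nu_{i1},\dots,nd\nu_{iq}}$ with the square-root and double-factorial terms, the leading behaviour is $\tfrac{nd}{2}\big[\langle\nu,\log\nu\rangle$ replaced against $2\langle\nu,\log\rho\rangle$ from the $\rho_i$-logarithms in the normalisation $(nd-1)!!$ in the denominator$\big]$; after carefully matching the $\tfrac12\log$ coefficients one obtains $-\tfrac{nd}{2}\sum_{i,j}\nu_{ij}\log\frac{\nu_{ij}}{\rho_i\rho_j} = -\tfrac{nd}{2}H(\nu\,|\,\rho\otimes\rho)$, again up to $O(\log n)$. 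Summing the two blocks yields the common leading term $-nH(\rho\,|\,\mu) - \tfrac{nd}{2}H(\nu\,|\,\rho\otimes\rho)$, and the two-sided bound follows by absorbing every remaining error (finitely many $O(\log n)$ contributions, one per factor, with $q$ fixed) into a single $o(\log n)$, with the sign flipped appropriately for the upper and lower inequalities.

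The main obstacle I anticipate is the delicate cancellation inside the matching factor: one must verify that the half-integer powers $\sqrt{(nd\nu_{ij})!}$ for $i\neq j$, the double factorials $(nd\nu_{ii}-1)!!$ for the diagonal, and the global normalisation $(nd-1)!!$ combine so that the $n\log n$ terms cancel exactly and the surviving $O(n)$ term is precisely $-\tfrac{nd}{2}\langle\nu,\log\nu\rangle + \tfrac{nd}{2}\log(nd)$, which then pairs with the $\log\rho_i$ terms coming from the second multinomials to build $-\tfrac{nd}{2}H(\nu\,|\,\rho\otimes\rho)$. The symmetry constraint $\nu_{ij}=\nu_{ji}$ and the counting of half-edges (each edge contributing to two entries of $L_2$, total mass accounting for the factor $\tfrac{d}{2}$ versus $d$) must be handled consistently here; this is exactly the method-of-types computation referenced to Dembo--Zeitouni and the combinatorial argument of Dembo et al., and it is where all the arithmetic care is concentrated. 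Once that identity is pinned down, the rest is a routine application of Stirling and collecting errors.
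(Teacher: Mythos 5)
Your proposal follows essentially the same route as the paper: start from the exact combinatorial identity for $P\big((L_1,L_2)=(\rho,\nu)\big)$, apply Stirling's formula and the double-factorial asymptotics to each of the four groups of factors, and verify that the $n\log n$ terms cancel so that the surviving order-$n$ terms assemble into $-nH(\rho\,|\,\mu)-\tfrac{nd}{2}H(\nu\,|\,\rho\otimes\rho)$. The bookkeeping you describe (spin factor plus first multinomial giving the $H(\rho\,|\,\mu)$ block, and the second multinomials combined with the matching factor and the $(nd-1)!!$ normalisation giving the $H(\nu\,|\,\rho\otimes\rho)$ block) is exactly the computation carried out in the paper's proof.
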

\begin{proof}
Using  the  refined  Stirling's  approximation  $\log n!=n\log n +o(\log n),$  see (Feller, 1971),and  similar arguments as, Dembo et. al(2014, pp. 10)  we  obtain  the  estimate  of  the  large  deviation  probabilities  in  equation~\ref{LDPP}  as  follows:

 $$\prod_{i=1}^{q}\Big(\heap{nd\rho_i}{nd\nu_{i1},nd\nu_{i2},...nd\nu_{iq}}\Big)=e^{nd\langle \log \rho,\,\rho\rangle-nd\langle \log \nu,\, \nu\rangle}=e^{\langle \log \rho,\,\nu\rangle-nd\langle \log \nu,\, \nu\rangle+o(\log n)},$$
  $$\prod_{i<j}^{q}\sqrt{(nd\nu_{ij})!}\prod_{i=j}(nd\nu_{ij}-1)!!=e^{-\sfrac{nd}{2}+\sfrac{nd}{2}\langle \log \nu,\,\nu\rangle+\sfrac{nd}{2} \log nd+o(\log n) }$$  and  $$(nd-1)!!=e^{\sfrac{nd}{2}\log nd-\sfrac{nd}{2}+o(\log n)}.$$
 Moreover, $$\prod_{i=1}^{n}\mu(\eta(i))\Big(\heap{n}{n\rho_1,n\rho_2,...,n\rho_q}\Big )=e^{n\langle \rho,\,\log \mu\rangle-n\langle \rho,\,\log \rho\rangle+o(\log n)}
 $$
Therefore,  putting  all  estimates  together  we  have

 \begin{equation}
 \begin{aligned}\label{LDp}
 &e^{\sfrac{nd}{2}\langle \log \rho,\,\nu\rangle+\sfrac{nd}{2}\langle \log \rho,\,\nu\rangle-\sfrac{nd}{2}\langle \log \nu,\, \nu\rangle+n\langle \rho,\,\log \mu\rangle-n\langle \rho,\,\log \rho\rangle-o(\log n)}\\
 &\le P\Big((L_1, L_2)=(\rho,\nu)\Big)\le e^{\sfrac{nd}{2}\langle \log \rho,\,\nu\rangle+\sfrac{nd}{2}\langle \log \rho,\,\nu\rangle-\sfrac{nd}{2}\langle \log \nu,\, \nu\rangle+n\langle \rho,\,\log \mu\rangle-n\langle \rho,\,\log \rho\rangle+o(\log n)}.
 \end{aligned}
 \end{equation}
 which  ends  the  proof  of  the  Lemma
\end{proof}

\subsection{Method  of  Types  for  the  empirical  measures  of  $d-$  regular random graphs}
Let  $$\skrik_n:=\Big\{ (\omega,\nu)\in \skril(\Gamma)\times\skril(\Gamma\times\Gamma):(L_1,\,L_2)=(\omega,\nu)\Big\}$$ i.e  $\skrik_n$  is  the  set  of  all possible  types  of  $(L_1,\,L_2)$  of  length  $n$  in $ \skril(\Gamma)\times\skril(\Gamma\times\Gamma).$  Then,  observe  that  we  have
 $$|\skrik_n|\le (n+1)^{|\Gamma|(1+|\Gamma|)}.$$
 Let  $F$  be subset  of $\skril(\Gamma)\times\skril(\Gamma\times\Gamma)$   and  observe  that $$P\Big ((L_1,L_2)\in F\Big)=\sum_{(\rho,\, \nu)\in F\cup \skrik_n}P\Big((L_1,L_2)=(\rho,\, \nu)\Big).$$

 Using  Lemma~\ref{LDPP1}   we  have  $$ \begin{aligned}
   e^{-n\inf_{(\rho,\, \nu)\in F^o\cap \skrik_n}I(\rho,\,\nu)+o(\log n)}&\le \sup_{(\rho,\, \nu)\in F^o\cap \skrik_n}e^{-nI(\rho,\,\nu)-o(\log n) }\le |\skrik_n|^{-1}\sup_{(\rho,\, \nu)\in F^o\cap \skrik_n}P\Big((L_1,L_2)=(\rho,\, \nu)\Big)\\
 &\le P\Big ((L_1,L_2)\in F\Big)\le |\skrik_n|\sup_{(\rho,\, \nu)\in F^c\cap \skrik_n}P\Big((L_1,L_2)=(\rho,\, \nu)\Big)\\
 &\le \sup_{(\rho,\, \nu)\in F^c\cap \skrik_n}e^{-nI(\rho,\,\nu)+o(\log n)}\le   e^{-n\inf_{(\rho,\, \nu)\in F^c\cap \skrik_n}I(\rho,\,\nu)+o(\log n)} \end{aligned}$$.

 This  gives

 $$ \begin{aligned}
 -\limsup_{n\to\infty}\inf_{(\rho,\, \nu)\in F^o\cap \skrik_n}I(\rho,\,\nu)&\le \liminf_{n\to\infty}\sfrac{1}{n}\log P\Big ((L_1,L_2)\in F\Big)\\
 &\le \limsup_{n\to\infty}\sfrac{1}{n}\log P\Big ((L_1,L_2)\in F\Big)\le  - \liminf_{n\to\infty}\sfrac{1}{n}\inf_{(\rho,\, \nu)\in F^c\cap \skrik_n}I(\rho,\,\nu) \end{aligned}$$.

Now  we  note  that  $F^c\cap\skrik_n\subset F^c$  for  all  $n$  and  so  we  have $$\liminf_{n\to\infty}\sfrac{1}{n}\inf_{(\rho,\, \nu)\in F^c\cap \skrik_n}I(\rho,\,\nu)\le\inf_{(\rho,\, \nu)\in F^c}I(\rho,\,\nu)$$  which  establishes  the  upper  bound  of  the  main  results.  Using  Similar  arguments  as  in  \cite[page~17]{DZ98}  we  obtain  $$\limsup_{n\to\infty}\inf_{(\rho,\, \nu)\in F^o\cap \skrik_n}I(\rho,\nu)\le\inf_{(\rho,\, \nu)\in F^o\cap \skrik_n}I(\rho,\,\nu)$$ which  completes  the  proof  of  Theorem~\ref{main}.
\section{Conclusion}
In  this  paper  we  have  presented  a  joint  LDP  for  the  empirical  spin  measure  and  the  empirical  co-operate  measure  of d-regular  random  graphs.The  main  technique  used  to  establish  this  result  is  combinatoric  arguments,  see  Dembo et. al  (2014)   and  the  method  of  types,  see Dembo and Zeitouni (1998, Pages~16-17). This  LDP  result  could  be  used  to  prove  a  Lossy  Asymptotic  Equipartion  Property  for  the  d-regular  random  graphs   and  to establish   annealed  asymptotic  result  about  the  random partition  function  of  the  d-regular  random  graphs .



\bigskip
\end{document}